\newtheorem{lemma}{Lemma}[section]
\newtheorem{lem}[lemma]{Lemma}
\newtheorem{prop}[lemma]{Proposition}
\theoremstyle{definition}
\theoremstyle{remark}
\numberwithin{equation}{section}
\newenvironment{pf}{\noindent{\bf Proof.}}{\hfill $\square$\medskip}
\def\NN{{\mathbb N}}
\def\PP{{\mathbb P}}
\def\fol{{\bar f}}
\def\0ol{{\bar 0}}
\def\1ol{{\bar 1}}
\def\2ol{{\bar 2}}
\def\ol2{{\bar 2}}
\def\3ol{{\bar 3}}
\def\4ol{{\bar 4}}
\def\5ol{{\bar 5}}
\def\6ol{{\bar 6}}
\def\7ol{{\bar 7}}
\def\8ol{{\bar 8}}
\def\9ol{{\bar 9}}
\def\bold0{{\bf 0}}
\def\bold1{{\bf 1}}
\def\bold2{{\bf 2}}
\def\bold3{{\bf  3}}
\def\bold4{{\bf 4}}
\def\bold5{{\bf 5}}
\def\bold6{{\bf 6}}
\def\bold7{{\bf 7}}
\def\bold8{{\bf 8}}
\def\bold9{{\bf 9}}
\def\P2Skly{\PP^2_{Skly}}
\def\coker{\operatorname {coker}}
\def\ker{\operatorname {ker}}
\def\dim{\operatorname{dim}}
\def\Fdim{{\sf Fdim}}
\def\Gr{{\sf Gr}}
\def\QGr{\operatorname{\sf QGr}}
\def\ul1{\operatorname{\underline{1}}}
\def\sT{{\sf T}}
\def\dirlim{\mathop{\vtop{\baselineskip -100pt\lineskip -1pt\lineskiplimit 0pt

\setbox0\hbox{lim}\copy0\hbox to \wd0{\rightarrowfill}}}\limits}
\def\invlim{\mathop{\vtop{\baselineskip -100pt\lineskip -1pt\lineskiplimit 0pt

\setbox0\hbox{lim}\copy0\hbox to \wd0{\leftarrowfill}}}\limits}
\def\I11{{1 \kern -0.8pt \! \mbox{l}}}
\def\mumu{{\mu\kern-4.2pt\mu}}
\def\bfmu{{\mu\kern-4.2pt\mu}}
\def\2slash{\backslash \! \backslash}
\def\boxtimes{\setbox0\hbox{$\Box$}\copy0\kern-\wd0\hbox{$\times$}}
\date{}                                           
\begin{document}

\title[Graded modules over monomial algebras and path algebras]{Corrigendum to 
``An equivalence of categories for 
graded modules over monomial algebras and path algebras of quivers''
[J. Algebra,  353(1) (2012) 249-260]}

\author{Cody Holdaway and S. Paul Smith}

\address{ Department of Mathematics, Box 354350, Univ.
Washington, Seattle, WA 98195}

\email{codyh3@math.washington.edu, smith@math.washington.edu}

\keywords{monomial algebras; Ufnarovskii graph; directed graphs; representations of quivers; quotient category.}

\subjclass{05C20, 16B50, 16G20, 16W50, 37B10}

\begin{abstract}
Our published paper contains an incorrect statement of a result due to Artin and Zhang. 
This corrigendum gives the correct statement of their result and includes a new result that allows us to use the correct version of Artin and Zhang's Theorem to prove our main theorem. Thus the main theorem of our published paper is correct as stated but its proof must be modified.
\end{abstract}

\maketitle

\pagenumbering{arabic}

\setcounter{section}{0}

\section{The error}

\subsection{}
We retain the notation and definitions in our published paper \cite{HS}.

\subsection{}
Proposition 2.1 in \cite{HS} is stated incorrectly. It should be replaced by the following statement.

\begin{prop}
\label{prop.AZ2}
\cite[Prop 2.5]{AZ}
\label{prop.AZ}
Let $A$ and $B$ be $\NN$-graded $k$-algebras such that $\dim_k A_i < \infty$ and $\dim_k B_i < \infty$
for all $i$. Let $\phi:A \to B$ be a homomorphism of graded $k$-algebras. 
If $\ker \phi$ and $\coker\phi$ belong to $\Fdim A$, then $- \otimes_A B$ induces an equivalence of categories $$
\QGr A \to \frac{\Gr B}{\sT_A}
$$
where
$$
\sT_{A}=\{M\in \Gr B\;|\; M_A\in \Fdim A\}.
$$
\end{prop}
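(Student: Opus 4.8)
\medskip
\noindent\textbf{Proof strategy.}
The plan is to produce the equivalence from the adjoint pair given by extension of scalars $F:=-\otimes_A B\colon\Gr A\to\Gr B$ and restriction of scalars $G\colon\Gr B\to\Gr A$ along $\phi$; here $F\dashv G$, the unit is $\eta_M\colon M\to(M\otimes_A B)_A$, $m\mapsto m\otimes1$, and the counit is $\varepsilon_N\colon(N_A)\otimes_A B\to N$, $n\otimes b\mapsto nb$. Since $\dim_k A_i<\infty$ and $\dim_k B_i<\infty$, $\Fdim A$ is a localizing subcategory of $\Gr A$ (so that $\QGr A=\Gr A/\Fdim A$ is defined); and $\sT_A$ is closed under submodules, quotients and extensions (as $G$ is exact and $\Fdim A$ is localizing) and under arbitrary direct sums (as $G$ preserves them), hence is a localizing subcategory of $\Gr B$ and $\Gr B/\sT_A$ is defined. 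By construction $G(\sT_A)\subseteq\Fdim A$.

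First I would show $F(\Fdim A)\subseteq\sT_A$. For $M\in\Fdim A$, apply $M\otimes_A-$ to the short exact sequence of $A$-bimodules $0\to\phi(A)\to B\to\coker\phi\to0$ (with $\phi(A)\cong A/\ker\phi$); by right exactness one gets an exact sequence of right $A$-modules $M\otimes_A\phi(A)\to M\otimes_A B\to M\otimes_A\coker\phi\to0$. The image of the first term is a quotient of $M\otimes_A\phi(A)\cong M/M\ker\phi$, hence lies in $\Fdim A$; and $M\otimes_A\coker\phi\in\Fdim A$, because for $\xi=\sum_i m_i\otimes c_i$ the cyclic submodule $\xi A$ is contained in $\sum_i m_i\otimes(c_iA)$, a finite-dimensional space since each $c_iA$ is finite-dimensional (this is exactly where $\coker\phi\in\Fdim A$ is used). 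As $\Fdim A$ is extension-closed, $(M\otimes_A B)_A\in\Fdim A$, i.e. $FM\in\sT_A$. Consequently $F$ and $G$ induce an adjoint pair $\bar F\colon\QGr A\rightleftarrows\Gr B/\sT_A\colon\bar G$ whose unit and counit are the localizations of $\eta$ and $\varepsilon$; it then suffices to prove that these localizations are isomorphisms, since an adjunction with invertible unit and counit is an equivalence.

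For the unit: if $M=A(-d)$ then $\eta_M$ is a shift of $\phi$, so $\ker\eta_M$ and $\coker\eta_M$ are shifts of $\ker\phi$ and $\coker\phi$ and hence lie in $\Fdim A$; the same then holds for every direct sum of shifts of $A$. For general $M$, pick a presentation $P_1\to P_0\to M\to0$ by such free modules; since $F$ is right exact and $G$ exact, $GFP_1\to GFP_0\to GFM\to0$ is exact, and $\eta$ is a morphism from the first presentation to the second. After localizing $\Gr A\to\QGr A$ both rows stay exact and $\eta_{P_0},\eta_{P_1}$ become isomorphisms, so the induced map on cokernels, which is the localization of $\eta_M$, is an isomorphism. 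Thus the unit is a natural isomorphism and $\bar F$ is fully faithful. For the counit: $\varepsilon_N$ is surjective (its image contains $n\cdot1=n$), so $\coker\varepsilon_N=0$; and for $N=B(-d)$ the map $\delta\colon B\to B\otimes_A B$, $b\mapsto b\otimes1$, is a right $A$-linear section of $\varepsilon_B$ (one checks $b\phi(a)\otimes1=b\otimes\phi(a)$ in $B\otimes_A B$), so $\ker\varepsilon_B\cong(B\otimes_A B)/\delta(B)\cong B\otimes_A\coker\phi$ as right $A$-modules, which lies in $\Fdim A$ by the element-wise argument above (with $B$ replacing $M$). Hence $\varepsilon_Q$ localizes to an isomorphism for every free $Q$, and the presentation-and-diagram-chase argument, now in $\Gr B$, extends this to all $N$. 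Therefore the counit is also a natural isomorphism, and $\bar F$ is an equivalence.

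The step I expect to be the main obstacle is $F(\Fdim A)\subseteq\sT_A$, together with its counterpart (the computation of $\ker\varepsilon_B$): one must see that tensoring the locally finite bimodule $\coker\phi$ with the highly non-torsion algebra $B$ nevertheless produces an object of $\Fdim A$. This is precisely where the hypothesis $\coker\phi\in\Fdim A$ is essential, and it is the reason the target must be $\Gr B/\sT_A$ rather than $\QGr B=\Gr B/\Fdim B$ (the subcategory $\sT_A$ can be strictly larger than $\Fdim B$). A subsidiary point to record is the routine identification of the unit and counit of the induced adjunction with the localizations of $\eta$ and $\varepsilon$.
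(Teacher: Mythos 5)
The corrigendum itself gives no proof of this proposition: it is quoted from Artin and Zhang \cite[Prop.~2.5]{AZ}, so there is no argument in the paper to compare yours against, and a direct proof such as yours is necessarily its own route. Your skeleton is the right one, and the concrete computations are correct: $M\otimes_A\coker\phi$ lies in $\Fdim A$ by the element-wise argument (and this is indeed the crux); $\ker\eta_P$ and $\coker\eta_P$ are torsion for graded free $P$ because $\eta_{A(-d)}$ is a shift of $\phi$; $\varepsilon_N$ is surjective; $\ker\varepsilon_B\cong B\otimes_A\coker\phi$ via the right $A$-linear section $b\mapsto b\otimes 1$; and the free-presentation/cokernel-comparison argument in the quotient categories is valid since $\pi_A$ and $\pi:\Gr B\to\Gr B/\sT_A$ are exact and the relevant rows are right exact.

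The one genuine gap is the sentence ``Consequently $F$ and $G$ induce an adjoint pair\dots''. For $\bar G$ this is fine: $\pi_A\circ G$ is exact and kills $\sT_A$, so Gabriel's universal property applies. But $F=-\otimes_AB$ is only right exact, so $F(\Fdim A)\subseteq\sT_A$ and $G(\sT_A)\subseteq\Fdim A$ do not by themselves give a functor $\bar F$ with $\bar F\pi_A\cong\pi F$: one must check that $\pi F$ inverts every morphism whose kernel and cokernel lie in $\Fdim A$, and the delicate case is an inclusion $M'\hookrightarrow N$ with torsion cokernel $T$, where $\ker(M'\otimes_AB\to N\otimes_AB)$ is a quotient of $\Tor_1^A(T,B)$; that this lies in $\sT_A$ again needs the hypotheses on $\phi$ (from $0\to\ker\phi\to A\to\phi(A)\to 0$ and $0\to\phi(A)\to B\to\coker\phi\to 0$ one gets $\Tor_1^A(T,\phi(A))\hookrightarrow T\otimes_A\ker\phi$, while $\Tor_1^A(T,\coker\phi)$ is a subquotient of a direct sum of shifts of $\coker\phi$, so all terms are torsion by your own element-wise lemma). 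Alternatively you can bypass Tor entirely: you have already shown $\pi_A\eta_M$ is invertible for every $M$, and $\bar G$ reflects isomorphisms because $\bar G(\pi N)=\pi_A(N_A)=0$ forces $N\in\sT_A$ by the very definition of $\sT_A$; then naturality $GF(f)\circ\eta_M=\eta_N\circ f$ shows $\bar G(\pi F(f))$, hence $\pi F(f)$, is invertible whenever $\pi_A f$ is, which yields $\bar F$ and the descended adjunction with the unit and counit you describe. With either repair your proof goes through. (A minor caveat: closure of $\Fdim A$ under extensions needs a bit more than local finiteness alone, e.g.\ $A_{\ge 1}$ finitely generated as an ideal; this holds for the algebras of \cite{HS} and under Artin--Zhang's standing hypotheses.)
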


\subsection{}
As in our published paper, $A$ is a finitely presented connected monomial algebra and 
$kQ$ is the path algebra of its Ufnarovskii graph, $Q$. 
Proposition 3.3 in \cite{HS} proved the existence of a homomorphism $\fol:A \to kQ$ of graded $k$-algebras and
 \cite[Prop. 4.1]{HS} showed that $\ker \fol$ and $\coker\fol$ belong to $\Fdim A$. 
 Therefore
 Proposition \ref{prop.AZ}  implies that $- \otimes_A kQ$ induces an equivalence of categories 
 $$
\QGr A \to \frac{\Gr kQ}{\sT_A}.
$$

Thus to prove our main theorem, \cite[Thm. 4.2]{HS} and \cite[Thm. 1.1]{HS}, which says that  
$- \otimes_A kQ$ induces an equivalence of categories 
 $$
\QGr A \equiv \QGr kQ= \frac{\Gr kQ}{\Fdim kQ},
$$
we must prove that $\Fdim kQ=\sT_A$. We do this in Proposition \ref{prop.whew} below.

\section{Corrected proof}

\subsection{}
The algebra $A$ has a distinguished set of generators called {\it letters} and its relations are generated by a 
finite set of words in those letters. The vertices in $Q$ are certain words, the arrows in $Q$ are
also words, and the arrow corresponding to a word $w$ is labelled by the first letter of $w$. 
The details are in \cite[Sect. 3.3]{HS}.

\begin{lemma}
 \label{lemma.1path}
The arrows in $Q$ have the following properties.
 \begin{enumerate}
  \item 
  Different arrows ending at the same vertex have different labels. 
  \item 
Different arrows having the same label end at different vertices. 
\end{enumerate}
\end{lemma}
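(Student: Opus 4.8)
The plan is to recall the explicit construction of the Ufnarovskii graph $Q$ from \cite[Sect. 3.3]{HS} and read off the two claims directly from the combinatorial definition of the arrows. Recall that the vertices of $Q$ are certain words in the letters of $A$ (the ``Ufnarovskii words'', all of some fixed length or lengths dictated by the relations), and that an arrow corresponds to a word $w$; its source and target are obtained from $w$ by deleting, respectively, the last letter and the first letter (i.e.\ an arrow from a word $u$ to a word $v$ exists precisely when $u$ and $v$ overlap in a word $w$ with the last $|u|-1$ letters of $u$ equal to the first $|u|-1$ letters of $v$, and the arrow is then labelled by the first letter of $w$, which is the first letter of $u$). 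So I would begin the proof by restating this description precisely, fixing notation for the length of vertices and for the source/target/label maps on arrows.

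For part (1): suppose $\alpha$ and $\beta$ are arrows both ending at the vertex $v$, and suppose they carry the same label $x$. Write $\alpha$ as a word $w_\alpha$ and $\beta$ as a word $w_\beta$. The target of an arrow is determined by deleting the first letter of its word, so $w_\alpha$ and $w_\beta$ have the same last $|v|$ letters, namely $v$; and having the same label means they have the same first letter $x$. But the word of an arrow has length $|v|+1$ and is obtained by prepending its label to its target word, so $w_\alpha = x v = w_\beta$. Hence $\alpha = \beta$, proving (1). For part (2): suppose $\alpha$ and $\beta$ have the same label $x$ and the same source $u$. The source is obtained by deleting the last letter, so $w_\alpha$ and $w_\beta$ agree in their first $|u|$ letters; since the label is the first letter of the word, consistency with $u$ is automatic here, and what we actually need is that the word is determined by the source together with one more datum — but in the Ufnarovskii graph the extra letter appended to form $w$ from its source $u$ is forced to be the last letter of the target, and the label being fixed does not by itself pin this down. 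So the correct argument for (2) is slightly different: two distinct arrows with source $u$ correspond to distinct words $w_\alpha \ne w_\beta$ that agree in their first $|u|$ letters but differ in the last letter; deleting the first letter then gives target words that agree in their first $|u|-1$ letters but differ in the last, hence are distinct vertices. The label plays no role in (2) beyond being recorded; thus distinct arrows with the same source automatically have distinct targets, and in particular distinct arrows with the same label and the same source have distinct targets.

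The main thing to be careful about is matching my reconstruction of the arrow/vertex/label conventions to the exact definitions in \cite[Sect. 3.3]{HS} — in particular whether the label is the first letter of $w$ and whether source/target correspond to deleting the last/first letter, since swapping these conventions swaps (1) and (2) but does not change the content. Once the conventions are fixed, both statements are immediate from the observation that an arrow's word is recovered from its target by prepending the label, and from its source by appending the last letter of the target; so an arrow is determined by (target, label) and also by (source, last letter of target), from which uniqueness in each case follows. I expect no genuine obstacle here; the only work is bookkeeping, so I would keep the proof to a few lines invoking the construction of \cite{HS}.
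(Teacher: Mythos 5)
Your argument for part (1) is essentially the paper's: an arrow ending at the vertex $v$ has word $xv$, where $x$ is its label, so two arrows with the same target and the same label have the same word and hence coincide; the paper states this contrapositively, but the content is identical.

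Part (2), however, has a genuine gap caused by misreading the statement. Statement (2) says nothing about sources: it asserts that two \emph{distinct} arrows carrying the same label have \emph{distinct targets}. You instead add the hypothesis that the two arrows share a source $u$, declare that ``the label plays no role in (2)'', and then prove that distinct arrows with the same source have distinct targets. That statement is true in $Q$, but it is not (2) and does not imply (2): the case that must be ruled out is two distinct arrows with the same label but \emph{different} sources ending at the same vertex, and your argument says nothing about it. Moreover the claim that the label is irrelevant is false --- drop the common-label hypothesis and (2) fails, since distinct arrows with different labels may perfectly well end at the same vertex (that is exactly what (1) allows). The paper disposes of (2) in one line: it is logically equivalent to (1), both saying that an arrow is determined by the pair (target, label). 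Concretely, if $a_w \neq a_{w'}$ have the same label $x$ and targets $v$ and $v'$, then $w = xv$ and $w' = xv'$, so $w \neq w'$ forces $v \neq v'$. Your closing remark that ``an arrow is determined by (target, label)'' contains this observation, but the explicit argument you give for (2) contradicts it and should be replaced by the equivalence with (1).
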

\begin{proof}
(1)
Let $a$ and $a'$ be different arrows ending at the vertex $v$. By definition, there are words $w$ and $w'$ such that $a=a_w$ and $a'=a_{w'}$, and letters $x$ and $x'$ such that $w=xv$ and $w'=x'v$. But $a \ne a'$ so $w \ne w'$ and therefore $x \ne x'$. But $a$ is labelled $x$ and $a'$ is labelled $x'$.  

(2)
This is obviously equivalent to (1). 
\end{proof}

\subsection{}
The homomorphism $\fol:A \to kQ$ is defined as follows: if $x$ is one of the letters generating $A$, then
$$
\fol(x):= \begin{cases}
	 \hbox{the sum of all arrows labelled $x$} & \text{}
	 \\
	 0  \hbox{ if there are no arrows labeled $x$.} & \text{}
	 \end{cases}
$$

\begin{lem}
Let $\fol:A \to kQ$ be the homomorphism above and write $A_n(kQ)$ for the right ideal of $kQ$ generated by $\fol(A_n)$.  For all $n \ge 0$,
$$
A_n(kQ)=kQ_{\ge n}.
$$
\end{lem}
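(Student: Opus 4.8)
## Proof Plan

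The plan is to prove the identity $A_n(kQ) = kQ_{\ge n}$ by induction on $n$, using Lemma \ref{lemma.1path} as the main combinatorial input. The case $n = 0$ is immediate since $A_0 = k$, so $\fol(A_0)$ contains the identity of $kQ$ (the sum of all vertices), whence $A_0(kQ) = kQ = kQ_{\ge 0}$. For the inductive step, it suffices to treat $n = 1$ in the appropriate relative sense: I would show that for every $m \ge 0$, multiplying $kQ_m$ on the right by the elements $\fol(x)$, $x$ a letter, spans exactly $kQ_{m+1}$; then $A_n(kQ) = \fol(A_1)^n \cdot kQ + \cdots$ telescopes to give $kQ_{\ge n}$. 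Concretely, since $\fol(A_n)$ is spanned by products $\fol(x_1)\cdots \fol(x_n)$ of images of letters, and each $\fol(x_i)$ is a sum of length-one paths (arrows), the right ideal $A_n(kQ)$ is spanned by elements of the form $\fol(x_1)\cdots\fol(x_n)\cdot p$ with $p$ a path; each such element is a $k$-linear combination of paths of length $\ge n$, giving the inclusion $A_n(kQ) \subseteq kQ_{\ge n}$.

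The reverse inclusion $kQ_{\ge n} \subseteq A_n(kQ)$ is where Lemma \ref{lemma.1path} does the work, and it is the main obstacle. I need: every path $q$ of length exactly $n$ lies in $A_n(kQ)$ — then paths of length $> n$ follow since $A_n(kQ)$ is a right ideal. Write $q = a_n a_{n-1} \cdots a_1$ as a composite of arrows (reading right to left), and let $x_i$ be the label of $a_i$. I claim $q = \fol(x_n)\fol(x_{n-1})\cdots\fol(x_1)\cdot e$, where $e$ is the idempotent at the source vertex of $a_1$ (or more precisely, $q$ appears as one term and all other terms vanish). The point is Lemma \ref{lemma.1path}(2): starting from a fixed vertex, there is at most one arrow with a given label leaving it, so when one expands $\fol(x_1)\cdot e = (\sum_{\text{arrows labelled }x_1} a)\cdot e$, only the single arrow $a_1$ survives. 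Then Lemma \ref{lemma.1path}(1) is used dually, or one iterates: having produced $a_1$, the product $\fol(x_2)\cdot a_1$ picks out exactly the arrows labelled $x_2$ that start where $a_1$ ends, and again there is a unique such arrow, namely $a_2$. Iterating up the path reconstructs $q$ exactly, with no leftover terms, so $q \in A_n(kQ)$.

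The one subtlety to get right is the direction of composition and which part of Lemma \ref{lemma.1path} applies at each stage: whether the uniqueness needed is "at most one arrow with label $x$ \emph{into} a given vertex" (part 1) or "\emph{out of} a given vertex" (part 2). Depending on the paper's convention for composing paths in $kQ$ and for the definition $w = xv$ of arrows, I would pin this down by a small explicit check and then the induction runs cleanly. Once the single-term survival is established at each step, the argument is purely formal: $kQ_{\ge n}$ is spanned by paths of length $\ge n$, each path of length $\ge n$ factors through a path of length exactly $n$ followed by a further path, and the length-$n$ piece is hit by $\fol(A_n)$ on the nose, so right-multiplying by the remaining path keeps us inside the right ideal $A_n(kQ)$. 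Combining the two inclusions gives $A_n(kQ) = kQ_{\ge n}$.
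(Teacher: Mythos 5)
Your overall plan (the easy inclusion $A_n(kQ)\subseteq kQ_{\ge n}$, then reconstructing paths from products of the $\fol(x_i)$ cut down by a vertex idempotent) is the right one, but the key step as you describe it rests on a misreading of Lemma \ref{lemma.1path}. Neither part of that lemma says ``at most one arrow with a given label \emph{leaving} a fixed vertex'': part (2) is just a reformulation of part (1), and both concern arrows \emph{ending} at a vertex. Outgoing uniqueness is in fact false in the Ufnarovskii graph: an arrow $a_w$ with $w=uy=xv$ is labelled by the \emph{first} letter of $w$, so when $u$ has positive length every arrow leaving $u$ carries the same label (the first letter of $u$); any vertex of out-degree at least $2$ is a counterexample. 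Consequently, if you anchor at the source of $a_1$ and expand $\fol(x_1)\cdot e$ with $e$ the idempotent at that source, several arrows can survive and the ``single-term survival'' you need fails at the very first step. The repair is the alternative you flagged but did not pin down: anchor at the \emph{terminal} vertex and use part (1). With the paper's convention that $ae_v=a$ for an arrow $a$ ending at $v$, write the path as $q=a_1a_2\cdots a_n$ ending at $v$, with $x_i$ the label of $a_i$; then $\fol(x_n)e_v=a_n$ because $a_n$ is the only arrow labelled $x_n$ ending at $v$, and inductively $\fol(x_i)\,a_{i+1}\cdots a_n=a_ia_{i+1}\cdots a_n$ because $a_i$ is the only arrow labelled $x_i$ ending at the source of $a_{i+1}$. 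Hence $q=\fol(x_1\cdots x_n)e_v\in A_n(kQ)$, and paths of length greater than $n$ follow since $A_n(kQ)$ is a right ideal.

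Once corrected, your argument is essentially the paper's, differing only in bookkeeping: the paper applies the uniqueness statement exactly once, to show every arrow labelled $x$ ending at $v$ equals $\fol(x)e_v$, i.e. $A_1B_0=B_1$ with $B=kQ$, and then obtains $A_nB_0=B_n$ by the purely formal induction $A_nB_0=A_1B_{n-1}=A_1B_0B_{n-1}=B_1B_{n-1}=B_n$, whereas you reapply the lemma at each of the $n$ steps along a fixed path to exhibit every length-$n$ path individually as $\fol(x_1)\cdots\fol(x_n)e_v$. Both routes give $A_n(kQ)=kQ_{\ge n}$, but only the terminal-vertex anchoring is available; the source-anchored version you wrote down cannot be made to work.
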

\begin{pf}
Write $B=kQ$. 

We will prove that $A_nB_0=B_n$ for all $n \ge 0$. This is certainly true for $n=0$.

We will now show that $A_1B_0=B_1$. To prove this, let $a$ be an arrow in $Q$ that begins at vertex $u$ 
and ends at vertex $v$. Then there are letters $x$ and $y$ such that $a=a_w$ and $w=uy=xv$. 
The arrow $a$ is therefore labelled $x$ and $\fol(x)=\cdots + a + \cdots$. By Lemma \ref{lemma.1path}, $a$ is the {\it only} arrow labelled $x$ that ends at $v$; hence, if $e_v$ is the trivial path at vertex $v$, then $f(x)e_v=ae_v=a$. Hence $a \in A_1B_0$. Thus $B_1 \subset A_1B_0$. It is clear that $A_1B_0 \subset B_1$ so
this completes the proof that $A_1B_0=B_1$.

We now argue by induction on $n$. If $A_{n-1}B_0=B_{n-1}$, then 
$$
A_nB_0 = (A_1)^nB_0=A_1(A_1)^{n-1}B_0=A_1B_{n-1}=A_1B_0B_{n-1}=B_1B_{n-1}=B_n.
$$
This completes the proof that $A_nB_0=B_n$ for all $n \ge 0$. It follows that $A_nB=B_{\geq n}$.
\end{pf}

\begin{prop}
\label{prop.whew}
$\sT_A=\Fdim kQ$.
\end{prop}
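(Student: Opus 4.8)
The plan is to prove the two inclusions $\Fdim kQ\subseteq\sT_A$ and $\sT_A\subseteq\Fdim kQ$ separately; the first is purely formal and the second is where the lemma just proved does the work. Throughout I will use the standard fact that, for any $\NN$-graded ring $R$, a graded right $R$-module $M$ lies in $\Fdim$ precisely when $mR$ is finite dimensional for every homogeneous $m\in M$ (an arbitrary element being a finite sum of homogeneous ones), equivalently when $M$ is the sum of its finite-dimensional graded submodules.

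First I would check $\Fdim kQ\subseteq\sT_A$. Let $M\in\Fdim kQ$ and $m\in M$. Since the $A$-module structure on $M$ is the restriction of the $kQ$-module structure along $\fol$, the $A$-submodule of $M$ generated by $m$ equals $m\cdot\fol(A)$, which is contained in the finite-dimensional space $m\cdot kQ$. Hence $M_A\in\Fdim A$, that is, $M\in\sT_A$.

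The substantive direction is $\sT_A\subseteq\Fdim kQ$. Let $M\in\sT_A$ and let $m\in M$ be homogeneous. Since $M_A\in\Fdim A$, the $\NN$-graded $A$-module $m\cdot A=\bigoplus_{i\ge 0}m\cdot A_i$ is finite dimensional, so $m\cdot A_n=0$ for some $n\ge 0$; because the $A$-action factors through $\fol$, this says $m\cdot\fol(A_n)=0$ in $M$. By the preceding lemma, $kQ_{\ge n}$ is exactly the right ideal of $kQ$ generated by $\fol(A_n)$, so
$$
m\cdot kQ_{\ge n}=\bigl(m\cdot\fol(A_n)\bigr)\cdot kQ=0.
$$
Thus the cyclic $kQ$-module $m\cdot kQ$ is annihilated by $kQ_{\ge n}$, hence is a quotient of $kQ/kQ_{\ge n}=\bigoplus_{i=0}^{n-1}kQ_i$, which is finite dimensional because each $\dim_k kQ_i<\infty$. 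So every homogeneous element of $M$ generates a finite-dimensional submodule, whence $M\in\Fdim kQ$.

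I do not anticipate a real obstacle. The single step carrying the content is the passage from ``$m\cdot A$ is finite dimensional'', which a priori only controls $m$ against the image $\fol(A)$, to ``$m$ is annihilated by the whole tail ideal $kQ_{\ge n}$''; this is supplied precisely by the identity $kQ_{\ge n}=\fol(A_n)\cdot kQ$ of the lemma above. The auxiliary points --- that the $\Fdim$ condition may be tested on homogeneous (equivalently cyclic) submodules, and that the graded pieces of $kQ$ are finite dimensional --- are routine in this setting.
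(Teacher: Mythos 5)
Your proof is correct and follows essentially the same route as the paper: the inclusion $\Fdim kQ\subseteq\sT_A$ by restriction of scalars along $\fol$, and the reverse inclusion by using the lemma $\fol(A_n)\cdot kQ=kQ_{\ge n}$ to show each homogeneous $m\in M$ satisfies $m\cdot kQ_{\ge n}=0$, so $m\cdot kQ$ is a finite-dimensional quotient of $kQ/kQ_{\ge n}$. The only cosmetic difference is that you test membership in $\Fdim$ via cyclic submodules in the easy direction as well, which the paper handles by a one-line restriction-of-scalars remark.
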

\begin{pf}
Every $kQ$-module is an $A$-module so a right $kQ$-module that is the sum of its finite dimensional 
$kQ$-submodules is also the sum of its finite dimensional $A$-submodules. Therefore $\Fdim kQ \subset \sT_A$.

To prove the reverse inclusion, let $M\in \Gr kQ$ and suppose $M$ is in $\sT_A$; i.e., $M$ is the 
sum of its finite dimensional $A$-submodules. 
Let $m$ be a homogeneous element in $M$. Then $\dim_k(mA)<\infty$ so $mA_{\geq n}=0$ for $n \gg 0$. 
In particular, $mA_n=0$ so 
$$
m(kQ_{\ge n}) =  mA_n(kQ)=0.
$$
Hence $m(kQ)$ is isomorphic to a quotient of $kQ/kQ_{\ge n}$ and  therefore finite dimensional.
In particular, $m$ belongs to the sum of the finite dimensional $kQ$-submodules of $M$. 
Hence $M \in \Fdim kQ$.

Thus, $\sT_A \subset \Fdim kQ$ and the claimed equality follows.
\end{pf}

 This completes the proof of \cite[Thm. 4.2]{HS} and \cite[Thm. 1.1]{HS}.


\end{document}